\DeclareMathAlphabet{\oldcal}{OMS}{zplm}{m}{n}
\newtheorem{theorem}{Theorem}[section]
\newtheorem{remark}[theorem]{Remark}
\newtheorem{lemma}[theorem]{Lemma}
\newcommand{\ds}{\displaystyle}
\newcommand{\eps}{\varepsilon}
\DeclareMathOperator*{\argmin}{argmin}
\begin{document}

\title{Proximal minimization in CAT$(\kappa)$ spaces\footnote{After the online publication of this paper, we realized that the proximal point algorithm and its splitting version discussed here had been previously obtained in \cite{OhtPal15} in the setting of CAT$(\kappa)$ spaces using in the definition of the resolvent the squared distance function, see \eqref{def-res-0}.}}
\author{Rafa Esp\'{i}nola${}^1$, Adriana Nicolae${}^{1,2}$ \\[0.2cm]
\footnotesize ${}^1$ Department of Mathematical Analysis, University of Seville\\
\footnotesize Apdo. 1160, 41080 Sevilla, Spain\\[0.1cm]
\footnotesize ${}^2$ Department of Mathematics, Babe\c{s}-Bolyai University, \\
\footnotesize Kog\u{a}lniceanu 1, 400084 Cluj-Napoca, Romania \\[0.1cm]  
\footnotesize E-mails: espinola@us.es, anicolae@math.ubbcluj.ro
}

\date{}
\maketitle

\begin{abstract}
In this note, we provide convergence results for the proximal point algorithm and a splitting variant thereof in the setting of CAT$(\kappa)$ spaces with $\kappa > 0$ using a recent definition for the resolvent of a convex, lower semi-continuous function due to Kimura and Kohsaka (J. Fixed Point Theory Appl. 18 (2016), 93--115).
\end{abstract}

{\small {\sl Keywords}: Proximal minimization, convex optimization, CAT$(\kappa)$ space.} 

{\small {\sl MSC 2010}: 90C25, 49M37, 53C23.}

\section{Introduction}

In Hilbert spaces, the proximal point algorithm originates from Martinet \cite{Mar70} and Rockafellar \cite{Roc76} and is a well-known method used for minimizing convex, lower-semicontinuous functions. More recently, this algorithm and generalizations thereof have been introduced in nonlinear settings too such as Riemannian manifolds of nonpositive sectional curvature \cite{FerOli02,LiLopMar09}, of sectional curvature bounded above by $\kappa > 0$ \cite{LiYao12} or even in geodesic spaces of nonpositive curvature in the sense of Alexandrov (also known as CAT$(0)$ spaces) \cite{Bac13}.

Let $X$ be a complete CAT$(0)$ space and $f : X \to (-\infty,\infty]$ be a proper, convex and lower semi-continuous function. The proximal point algorithm generates a sequence $(x_n)$ starting from a point $x_0 \in X$  by the following rule: $x_{n+1} = J^f_{\lambda_n}(x_n)$, where $(\lambda_n)$ is a sequence of positive real numbers and for $\lambda > 0$,  
\begin{equation}\label{def-res-0}
J^f_{\lambda}(x) = \argmin_{y \in X}\left[f(y) + \frac{1}{\lambda}d(y,x)^2\right], \qquad x \in X.
\end{equation}
The mapping $J^f_\lambda : X \to X$, called the resolvent of $f$, is well-defined in this context and was studied by Jost \cite{Jos95,Jos97} and Mayer \cite{May98} in connection to the theory of generalized harmonic maps. If there is no ambiguity concerning the function $f$, we usually just write $J_\lambda$. One of the remarkable properties of the resolvent is the fact that it is firmly nonexpansive in the sense of \cite{AriLeuLop14}. Moreover, the set of fixed points of $J_\lambda$ is precisely the set of minimum points of $f$. Considering a suitable notion of weak convergence that goes back to Lim \cite{Lim76} and is also referred to as $\Delta$-convergence, Ariza-Ruiz, Leu\c stean and L\'{o}pez-Acedo \cite{AriLeuLop14} showed that for any $\lambda > 0$ and $x \in X$, the sequence of Picard iterates $(J_\lambda^n (x))$ $\Delta$-converges to a minimum point of $f$ (provided such a point exists). Ba\v{c}\'{a}k \cite{Bac13} proved that if $f$ attains its minimum, then the sequence $(x_n)$ generated by the proximal point algorithm $\Delta$-converges to a minimum point of $f$. Motivated by results of Bertsekas \cite{Ber11} in the Euclidean setting, Ba\v{c}\'{a}k \cite{Bac14a} additionally studied in the context of CAT$(0)$ spaces a splitting proximal point algorithm  for finding a minimum point of a function that can be written as a finite sum of convex, lower semi-continuous functions and applied his findings to the computation of the geometric median and the Fr\'{e}chet mean of a finite set of points.

Very recently, Kimura and Kohsaka \cite{KimKoh16} introduced in  CAT$(\kappa)$ spaces with $\kappa > 0$ the resolvent of a convex, lower-semicontinuous function $f$ as an instance of so-called firmly spherically nonspreading mappings. They showed that, under appropriate boundedness conditions, the Picard iterates of the resolvent of $f$ $\Delta$-converge to a minimum point of $f$. In this paper we use this definition to study in the setting of CAT$(\kappa)$ spaces with $\kappa > 0$ the convergence of the corresponding versions of the proximal point and the splitting proximal point algorithms discussed in \cite{Bac13,Bac14a} in CAT$(0)$ spaces. 

Finally, we would like to point out that after the submission of this paper, it was brought to our attention that Kimura and Kohsaka have also independently submitted a recent joint work (which was accepted in the meantime, see \cite{KimKoh16a}) on the proximal point algorithm in CAT$(\kappa)$ spaces.

\section{Preliminaries}
Let $(X,d)$ be a metric space. A {\it geodesic path} joining $x,y \in X$ is a mapping $c:[0,l] \subseteq \mathbb{R} \to X$ such that $c(0) = x, c(l) = y$ and $d\left(c(t),c(t^{\prime})\right) = \left|t - t^{\prime}\right|$ for every $t,t^{\prime} \in [0,l]$. The image $c\left([0,l]\right)$ of $c$ is called a {\it geodesic segment} from $x$ to $y$. A point $z\in X$ belongs to such a geodesic segment if there exists $t\in [0,1]$ such that $d(z,x)= td(x,y)$ and $d(z,y)=(1-t)d(x,y)$, and in this case we write $z=(1-t)x+ty$. We say that $(X,d)$ is a {\it geodesic space} if every two points in $X$ can be joined by a geodesic path. A subset $C$ of $X$ is {\it convex} if given two points of $C$, any geodesic segment joining them is contained in $C$. A rigorous introduction to geodesic spaces is provided in \cite{Bri99}.

Let $(X,d)$ be a geodesic space. Having $C \subseteq X$ convex and $f : C \to (-\infty,\infty]$, the {\it domain} of $f$ is defined by $\text{dom} f = \{x \in C \mid f(x) < \infty\}$. The function $f$ is called {\it proper} if $\text{dom} f \ne \emptyset$. We say that $f$ is {\it convex} if for every $x,y \in C$ and $t \in [0,1]$, $f((1-t)x+ty) \le (1-t)f(x) + tf(y)$. The function $f$ is {\it uniformly convex} on $\text{dom} f$ if there exists a nondecreasing function $\delta : [0,\infty) \to [0,\infty]$ vanishing only at $0$ such that for every $x,y \in \text{dom} f$ and $t \in [0,1]$,
\[f((1-t)x+ty) \le (1-t)f(x) + tf(y) - t(1-t)\delta(d(x,y)).\]
One can prove that this is in fact equivalent to the following condition (see also \cite{Zal83} where uniformly convex functions are studied in Banach spaces): for every $\eps > 0$ there exists $\delta > 0$ such that if $x,y \in \text{dom} f$ with $d(x,y) \ge \eps$, then
\[f\left(\frac{1}{2}x + \frac{1}{2}y\right) \le \frac{1}{2}f(x) + \frac{1}{2}f(y) - \delta.\]

Fix $\kappa \in \mathbb{R}$ and denote by $M^2_{\kappa}$ the complete, simply connected model surface of constant sectional curvature $\kappa$. A {\it comparison triangle} for a geodesic triangle $\Delta(x_1,x_2,x_3)$ in $X$ is a triangle $\overline{\Delta}=\Delta(\overline{x}_1, \overline{x}_2, \overline{x}_3)$ in $M^2_{\kappa}$ such that $d(x_i,x_j) = d_{M^2_{\kappa}}(\overline{x}_i,\overline{x}_j)$ for $i,j \in \{1,2,3\}$. A geodesic triangle $\Delta$ is said to satisfy the {\it CAT$(\kappa)$ inequality} if for every comparison triangle $\overline{\Delta}$ of $\Delta$ and every $x,y \in \Delta$ we have that $d(x,y) \le d_{M^2_{\kappa}}(\overline{x},\overline{y})$, where  $\overline{x},\overline{y} \in \overline{\Delta}$ are the comparison points of $x$ and $y$, i.e., if $x = (1-t)x_i + tx_j$ then $\overline{x} = (1-t)\overline{x}_i + t\overline{x}_j$. A metric space is called a {\it CAT$(\kappa)$ space} if every two points (at distance less than $\pi/\sqrt{\kappa}$ for $\kappa > 0$) can be joined by a geodesic path and every geodesic triangle (having perimeter less than $2\pi/\sqrt{\kappa}$ for $\kappa > 0$) satisfies the CAT$(\kappa)$ inequality.

Suppose in the sequel that $(X,d)$ is a complete CAT$(\kappa)$ space with $\kappa > 0$ such that for every $v,w \in X$, $d(v,w) < \pi/(2\sqrt{\kappa})$. For $x,y,z \in X$ with $y \ne z$ and $t \in [0,1]$, the following inequality 
\begin{align} \label{ineq-CAT-k}
\begin{split}
\cos\left(\sqrt{\kappa}d((1-t)y+tz, x)\right) & \ge \frac{\sin\left(\sqrt{\kappa}(1-t)d(y,z)\right)}{\sin\left(\sqrt{\kappa}d(y,z)\right)}\cos \left(\sqrt{\kappa}d(y, x)\right)\\
& \quad + \frac{\sin\left(\sqrt{\kappa}td(y,z)\right)}{\sin\left(\sqrt{\kappa}d(y,z)\right)}\cos \left(\sqrt{\kappa}d(z, x)\right)
\end{split}
\end{align}
is an immediate consequence of the spherical law of cosines.

Let $(x_n)$ be a sequence in $X$. For $x \in X$, set $\ds r(x,(x_n)) = \limsup_{n \to \infty}d(x,x_n)$. The {\it asymptotic radius} of $(x_n)$ is given by $\ds r((x_n)) = \inf\{r(x,x_n)\mid x \in X\}$ and the {\it asymptotic center} of $(x_n)$ is  the set $\ds A((x_n))=\left\{x\in X:\limsup_{n\to\infty} d(x,x_n)=r((x_n))\right\}$. The sequence $(x_n)$ is said to {\it $\Delta$-converge} to $x\in X$ if $x$ is the unique point in the asymptotic center of every subsequence of $(x_n)$. In this case $x$ is called the {\it $\Delta$-limit} of $(x_n)$. Assume next that $r((x_n)) < \pi/(2\sqrt{\kappa})$. Then $A((x_n))$ is a singleton (see \cite[Proposition 4.1]{EspFer09}) and $(x_n)$ has a $\Delta$-convergent subsequence (see \cite[Corollary 4.4]{EspFer09}). Moreover, if $(x_n)$ $\Delta$-converges to some $x \in X$ and  $f : X \to (-\infty,\infty]$ is a proper, convex lower semi-continuous function, then $\ds f(x) \le \liminf_{n \to \infty} f(x_n)$ (see \cite[Lemma 3.1]{KimKoh16}).

Any proper, convex and lower semi-continuous function $f : X \to (-\infty,\infty]$ is bounded below (see \cite[Theorem 3.6]{KimKoh16}). If there exists a sequence $(x_n)$ such that $\ds \lim_{n \to \infty}f(x_n) = \inf_{y \in X}f(y)$ and $r((x_n)) < \pi/(2\sqrt{\kappa})$, then $f$ attains its minimum, i.e., there exists $z \in X$ such that $\ds f(z) = \inf_{x \in X} f(x)$ and we call $z$ a minimum point of $f$. Indeed, denote $\ds r = \inf_{y \in X}f(y)$ and consider the sets $C_p = \left\{y \in X \mid f(y) \le r + 1/p \right\}$. For any $p \ge 1$, the sequence $(x_n)$ will eventually be contained in $C_p$. In addition, one can easily see that $(C_p)$ is a decreasing sequence of nonempty, closed and convex sets, so, by \cite[Corollary 3.6]{EspFer09}, $\ds\bigcap_{p \ge 1}C_p \ne \emptyset$. Any point in this intersection is a minimum point of $f$. In particular, if $\text{diam}(X) < \pi/(2\sqrt{\kappa})$, then $f$ always has a minimum point. Note also that, if nonempty, the set of minimum points of $f$ has a unique closest point to any given point in $X$ because it is closed and convex. A detailed discussion on convex analysis in CAT$(0)$ spaces can be found in \cite{Bac14}.

In \cite{KimKoh16}, Kimura and Kohsaka define and study properties of the resolvent for a proper, convex and lower semi-continuous function $f : X \to (-\infty,\infty]$. Consider first, for a fixed $x \in X$, the following convex functions (see \cite[Lemma 4.1]{KimKoh16})
\begin{equation} \label{def-Psi-1}
\Psi^1_x: X \to [1/\kappa, \infty), \quad \Psi^1_x(y) = \frac{1}{\kappa\cos \left(\sqrt{\kappa}d(y,x)\right)},
\end{equation}
\begin{equation} \label{def-Psi-2}
\Psi^2_x: X \to [-1/\kappa, 0), \quad  \Psi^2_x(y) = -\frac{\cos \left(\sqrt{\kappa}d(y,x)\right)}{\kappa},
\end{equation}
and
\begin{equation} \label{def-Psi}
\Psi_x: X \to [0, \infty), \quad \Psi_x(y) = \Psi^1_x(y) + \Psi^2_x(y).
\end{equation}
Then, for $\lambda > 0$, the {\it resolvent} of $f$ is defined by
\begin{equation} \label{def-res-pos}
J_{\lambda}^f(x) = \argmin_{y \in X}\left[f(y) + \frac{1}{\lambda}\Psi_x(y)\right], \qquad x \in X.
\end{equation}
This mapping is well-defined (see \cite[Theorem 4.2]{KimKoh16}) and, when $k \searrow 0$, one actually recovers the definition \eqref{def-res-0} of the resolvent in CAT$(0)$ spaces. Moreover, if $C \subseteq X$ is nonempty, closed and convex, then the indicator function $\delta_C : X \to [0,\infty]$,
\[
\delta_C(x):=\left\{
\begin{array}{ll}
0, & \mbox{if } x \in C,\\
\infty, & \mbox{otherwise},
\end{array}
\right.
\]
is proper, convex and lower semi-continuous and for any $\lambda > 0$, $J_\lambda^{\delta_C}$ is the metric projection onto $C$, as is the case for the resolvent of $\delta_C$ in any CAT$(0)$ space (see also \cite[Remark 4.4]{KimKoh16}).

In \cite[Theorem 4.6]{KimKoh16} it is shown that the resolvent $J_\lambda$ is firmly spherically nonspreading, that is, for any $x,z \in X$, the following inequality holds
\begin{align*}
& \left(\cos\left(\sqrt{\kappa}d(x,J_\lambda x)\right) + \cos\left(\sqrt{\kappa}d(z, J_\lambda z)\right)\right)\cos^2\left(\sqrt{\kappa}d(J_\lambda x, J_\lambda z)\right)\\
& \quad \ge 2\cos\left(\sqrt{\kappa}d(J_\lambda x, z)\right)\cos\left(\sqrt{\kappa}d(x, J_\lambda z)\right).
\end{align*}
Furthermore, if $\text{Fix}(J_\lambda) \ne \emptyset$, then, by \cite[Theorem 4.6.(i)]{KimKoh16}, it follows that for every $x \in X$ and $z \in \text{Fix}(J_\lambda)$,
\[\cos \left(\sqrt{\kappa}d(J_\lambda x,z)\right)\cos \left(\sqrt{\kappa}d(x,J_\lambda x)\right) \ge \cos \left(\sqrt{\kappa}d(x,z)\right),\]
a condition which is satisfied by the projection mapping onto closed and convex subsets (see \cite[Proposition 3.5]{EspFer09}). 

The following result will be used in the next section.

\begin{lemma}\label{lemma-splitting}
Let $(a_j)$ and $(b_j)$ be sequences of nonnegative real numbers such that $(a_j)$ is bounded, $\sum_{j = 0}^\infty b_j < \infty$ and there exists $j_0 \in \mathbb{N}$ such that for all $j \ge j_0$, $a_{j+1} \ge a_j  - b_j$. Then $(a_j)$ is convergent.
\end{lemma}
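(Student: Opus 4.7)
The plan is to reduce this to the classical fact that a bounded monotone sequence converges, by constructing an auxiliary sequence that absorbs the error terms $b_j$ and becomes nondecreasing.

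First I would set $c_j = a_j + \sum_{k=0}^{j-1} b_k$ for $j \ge 1$, with $c_0 = a_0$. The hypothesis $a_{j+1} \ge a_j - b_j$ for $j \ge j_0$ rearranges to $a_{j+1} + b_j \ge a_j$, which, after adding $\sum_{k=0}^{j-1} b_k$ to both sides, is precisely $c_{j+1} \ge c_j$. Thus $(c_j)$ is nondecreasing from index $j_0$ onwards.

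Next I would verify that $(c_j)$ is bounded above: since $(a_j)$ is bounded and the series $\sum_{k=0}^\infty b_k$ converges, we have $c_j \le \sup_j a_j + \sum_{k=0}^\infty b_k < \infty$. Combined with the eventual monotonicity, this forces $(c_j)$ to converge. Finally, since $\sum_{k=0}^{j-1} b_k$ also converges (to $\sum_{k=0}^\infty b_k$) and $a_j = c_j - \sum_{k=0}^{j-1} b_k$, the sequence $(a_j)$ is a difference of two convergent sequences and hence convergent.

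There is essentially no obstacle here; the only point requiring a moment of care is picking the right auxiliary sequence so that the inequality $a_{j+1} \ge a_j - b_j$ turns into a monotonicity statement. The choice $c_j = a_j + \sum_{k<j} b_k$ works because adding the partial sums of $(b_j)$ exactly compensates for the maximal permitted decrease at each step.
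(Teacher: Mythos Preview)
Your proof is correct. The approach differs slightly from the paper's: instead of building the auxiliary sequence $c_j = a_j + \sum_{k<j} b_k$ and invoking the monotone convergence theorem, the paper telescopes the inequality directly to obtain $a_{n+1} \ge a_m - \sum_{j=m}^{n} b_j$ for $n \ge m \ge j_0$, takes $\liminf$ in $n$ and then $\limsup$ in $m$ to conclude $\liminf_n a_n \ge \limsup_m a_m$. The two arguments are essentially dual formulations of the same idea (your $c_j$ is precisely what makes the paper's telescoped inequality a monotonicity statement), and neither is materially shorter or more general than the other; your version has the minor expository advantage of naming the limit explicitly as $\lim_j c_j - \sum_{k=0}^\infty b_k$.
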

\begin{proof}
For all $m,n \in \mathbb{N}$ with $n \ge m \ge j_0$, $\ds a_{n+1} \ge a_m - \sum_{j=m}^nb_j$. Thus, $\ds \liminf_{n \to \infty}a_n \ge a_m -  \sum_{j=m}^\infty b_j$, from where $\ds \liminf_{n \to \infty}a_n \ge \limsup_{m \to \infty} a_m$, which shows that $(a_j)$ is convergent.
\end{proof}

\section{Main results}

Let $(X,d)$ be a complete CAT$(\kappa)$ space with $\kappa > 0$ such that for every $v,w \in X$, $d(v,w) < \pi/(2\sqrt{\kappa})$ and suppose $f : X \to (-\infty,\infty]$ is a proper, convex and lower semi-continuous function. The following inequality also appears in the proof of \cite[Theorem 4.6]{KimKoh16} in a more particular form.

\begin{lemma} \label{lemma-ineq-pos}
If $\lambda > 0$ and $J_\lambda$ is defined by \eqref{def-res-pos}, then for  $x, z \in X$ we have that
\begin{align*}
\lambda\left(f(J_\lambda x) - f(z)\right) & \le 2\left(1+\frac{1}{\cos^2 \left(\sqrt{\kappa}d(x,J_\lambda x)\right)}\right) \\
& \quad \times 
 \left(\cos \left(\sqrt{\kappa}d(z,J_\lambda x)\right)\cos \left(\sqrt{\kappa}d(x,J_\lambda x)\right) - \cos \left(\sqrt{\kappa}d(z,x)\right)\right).
\end{align*}
\end{lemma}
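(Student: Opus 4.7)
The plan is to combine the argmin property of $J_\lambda x$ with the convexity of $f$ and the CAT$(\kappa)$ inequality~\eqref{ineq-CAT-k}, then pass to a one-sided directional derivative in the direction of $z$. Writing $y := J_\lambda x$ and assuming $z \neq y$ (the contrary case being trivial), I would consider the geodesic $\gamma_t := (1-t)y + tz$ for $t \in (0,1]$. From~\eqref{def-res-pos} one has $f(y) + \Psi_x(y)/\lambda \le f(\gamma_t) + \Psi_x(\gamma_t)/\lambda$, and convexity gives $f(\gamma_t) \le (1-t)f(y) + tf(z)$; subtracting yields the preliminary inequality
\[
  \lambda\bigl(f(y) - f(z)\bigr) \;\le\; \frac{\Psi_x(\gamma_t) - \Psi_x(y)}{t}, \qquad t \in (0,1].
\]

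To estimate the right-hand side, I would use the identity $\Psi_x(u) = g\bigl(\cos(\sqrt\kappa\,d(u,x))\bigr)$, where $g(\tau) := (1-\tau^2)/(\kappa\tau)$ is strictly decreasing on $(0,1]$. The CAT$(\kappa)$ inequality~\eqref{ineq-CAT-k} applied to $y$, $z$, $x$ (with $a := d(y,z)$, $b := d(y,x)$, $c := d(z,x)$) gives $\cos(\sqrt\kappa\,d(\gamma_t,x)) \ge A(t)$, where
\[
  A(t) \;:=\; \frac{\sin(\sqrt\kappa(1-t)a)\cos(\sqrt\kappa b) \;+\; \sin(\sqrt\kappa ta)\cos(\sqrt\kappa c)}{\sin(\sqrt\kappa a)},
\]
and monotonicity of $g$ yields $\Psi_x(\gamma_t) \le g(A(t))$, with equality at $t = 0$ since $A(0) = \cos(\sqrt\kappa b)$. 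Dividing by $t$ and letting $t \to 0^+$ then gives
\[
  \lambda\bigl(f(y) - f(z)\bigr) \;\le\; g'\bigl(\cos(\sqrt\kappa b)\bigr)\, A'(0).
\]

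A direct computation yields $g'(\cos(\sqrt\kappa b)) = -\kappa^{-1}\bigl(1 + \sec^2(\sqrt\kappa b)\bigr)$ and $A'(0) = \bigl(\sqrt\kappa\,a/\sin(\sqrt\kappa a)\bigr)\bigl(\cos(\sqrt\kappa c) - \cos(\sqrt\kappa a)\cos(\sqrt\kappa b)\bigr)$, so their product already has the structural form of the claimed right-hand side, with the scalar factor $\sqrt\kappa\,a/\sin(\sqrt\kappa a)$ appearing where the constant $2$ is written. The elementary bound $\theta/\sin\theta < \pi/2$ on $(0,\pi/2)$, applicable since $d(y,z) < \pi/(2\sqrt\kappa)$ by the standing hypothesis, then absorbs this coefficient into the factor $2$ in the statement. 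The main obstacle I anticipate is keeping careful track of signs: the quantity $\cos(\sqrt\kappa a)\cos(\sqrt\kappa b) - \cos(\sqrt\kappa c)$ can have either sign, so the bound $\theta/\sin\theta \le 2$ tightens the estimate only when it is nonnegative, and the opposite case may require a case split or a sharper direct estimate of the incremental quotient $\bigl(g(A(t)) - g(A(0))\bigr)/t$ for fixed $t$, rather than through passage to the limit.
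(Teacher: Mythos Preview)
Your approach is essentially identical to the paper's: both argue by comparing $J_\lambda x$ with a point on the geodesic between $J_\lambda x$ and $z$, use convexity of $f$ and the CAT$(\kappa)$ inequality \eqref{ineq-CAT-k}, divide and pass to the limit along the geodesic, and finally replace the factor $\theta/\sin\theta$ (with $\theta=\sqrt{\kappa}\,d(z,J_\lambda x)$) by $2$. The paper parametrises the geodesic as $(1-t)z+tJ_\lambda x$ and lets $t\nearrow 1$, which is your $t\to 0^+$ in disguise; your packaging via $g(\tau)=(1-\tau^2)/(\kappa\tau)$ and the chain rule is a cleaner bookkeeping of the same computation.

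Your caution about signs is well placed—and in fact the paper's proof does \emph{not} address it. After the limit the paper obtains
\[
\lambda\bigl(f(J_\lambda x)-f(z)\bigr)\le \frac{c}{\sin c}\Bigl(1+\frac{1}{\cos^2 b}\Bigr)\bigl(\cos c\cos b-\cos a\bigr),
\]
with $a=\sqrt{\kappa}d(x,z)$, $b=\sqrt{\kappa}d(x,J_\lambda x)$, $c=\sqrt{\kappa}d(z,J_\lambda x)$, and then simply invokes $\sin\alpha\ge\alpha/2$ to replace $c/\sin c$ by $2$. As you observe, this replacement is only an upper bound when $\cos c\cos b-\cos a\ge 0$; when the bracket is negative the step weakens the inequality in the wrong direction. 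The paper does not discuss this case. In every application of the lemma within the paper, however, $z$ is a minimiser of $f$ (or of $f_i$), so the left-hand side is nonnegative and hence the bracket is automatically nonnegative—so the gap does not affect the downstream results. For the lemma as stated for arbitrary $z$, your instinct to handle the negative-bracket case separately (or to bound the incremental quotient directly before passing to the limit) is the honest thing to do, and goes beyond what the paper supplies.
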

\begin{proof}
If $z =  J_\lambda x$, the inequality holds with equality. Otherwise, let $a = \sqrt{\kappa}d(x,z)$, $b = \sqrt{\kappa}d(x,J_\lambda x)$, $c = \sqrt{\kappa}d(z,J_\lambda x)$ and $e = \sqrt{\kappa}d((1-t)z+tJ_\lambda x, x)$, where $t \in (0,1)$. Since
\begin{align*}
f(J_\lambda x) + \frac{1}{\lambda}\left(\frac{1}{\cos b} - \cos b\right) & \le f((1-t)z+tJ_\lambda x) + \frac{1}{\lambda}\left(\frac{1}{\cos e} - \cos e\right)\\
& \le (1-t) f(z) + t f(J_\lambda x) + \frac{1}{\lambda}\left(\frac{1}{\cos e} - \cos e\right)
\end{align*}
and, by \eqref{ineq-CAT-k},
\[\cos e \ge \frac{\sin ((1-t)c)}{\sin c}\cos a + \frac{\sin (tc)}{\sin c}\cos b\]
we obtain that
\begin{align*}
\lambda(1-t)\left(f(J_\lambda x) - f(z)\right) &\le \frac{\cos b\left(\sin c - \sin(tc)\right) - \sin ((1-t)c)\cos a}{\cos b\left(\sin((1-t)c)\cos a + \sin(tc)\cos b\right)} \\
& \quad + \left(1 - \frac{\sin (tc)}{\sin c} \right) \cos b - \frac{\sin ((1-t)c)}{\sin c}\cos a.
\end{align*}
Dividing by $(1-t)$ and letting $t \nearrow 1$, 
\[\lambda\left(f(J_\lambda x) - f(z)\right) \le \frac{c}{\sin c}\left(1 + \frac{1}{\cos^2 b}\right)\left(\cos c \cos b - \cos a\right).\]
Using the fact that for any $\alpha \in [0,\pi/2)$, $\sin \alpha \ge \alpha/2$, we obtain the desired inequality.
\end{proof}

Consider the following variant of the proximal point algorithm where one uses the resolvent defined by \eqref{def-res-pos}: given $(\lambda_n)$ a sequence of positive real numbers and $x_0 \in X$, define the sequence $(x_n)$ in $X$ by
\begin{equation} \label{alg-ppa-pos}
x_{n+1} = J_{\lambda_n}(x_{n}) = \argmin_{y \in X}\left[f(y) + \frac{1}{\lambda_n}\Psi_{x_{n}}(y)\right].
\end{equation}
The next result shows that the sequence $(x_n)$ defined above $\Delta$-converges to a minimum point of $f$ (provided such a point exists) and constitutes a counterpart of \cite[Theorem 1.4]{Bac13} from the context of CAT$(0)$ spaces.

\begin{theorem}
Let $(X,d)$ be a complete CAT$(\kappa)$ space with $\kappa > 0$ such that for every $v,w \in X$, $d(v,w) < \pi/(2\sqrt{\kappa})$. Suppose $f : X \to (-\infty,\infty]$ is a proper, convex and lower semi-continuous function which attains its minimum. Then, given any $x_0 \in X$ and any sequence of positive real numbers $(\lambda_n)$ with $\ds \sum_{n \ge 0}\lambda_n = \infty$, the sequence $(x_n)$ defined by \eqref{alg-ppa-pos} $\Delta$-converges to a minimum point of $f$.
\end{theorem}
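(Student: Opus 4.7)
Fix a minimizer $p$ of $f$ (guaranteed by hypothesis) and apply Lemma~\ref{lemma-ineq-pos} with $x = x_n$, $z = p$; since $f(x_{n+1}) \ge f(p)$, the left-hand side is nonnegative, so
\[\cos\bigl(\sqrt{\kappa}d(p, x_{n+1})\bigr)\cos\bigl(\sqrt{\kappa}d(x_n, x_{n+1})\bigr) \ge \cos\bigl(\sqrt{\kappa}d(p, x_n)\bigr).\]
Writing $\alpha_n = \cos(\sqrt{\kappa}d(p, x_n))$ and $\beta_n = \cos(\sqrt{\kappa}d(x_n, x_{n+1}))$, both in $(0, 1]$, this reads $\alpha_{n+1}\beta_n \ge \alpha_n$, so $(\alpha_n)$ is nondecreasing, bounded above by $1$, and hence convergent; equivalently, $(d(p, x_n))$ is nonincreasing. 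Since $\alpha_{n+1} - \alpha_n \ge \alpha_{n+1}(1 - \beta_n) \ge \alpha_0(1 - \beta_n)$ with $\alpha_0 > 0$, I also get $\sum_n (1 - \beta_n) < \infty$ by telescoping, hence $d(x_n, x_{n+1}) \to 0$.

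With $d(x_n, x_{n+1}) \to 0$ the prefactor $A_n := 2\bigl(1 + 1/\cos^2(\sqrt{\kappa}d(x_n, x_{n+1}))\bigr)$ in Lemma~\ref{lemma-ineq-pos} is bounded, and using $\beta_n \le 1$ once more,
\[\lambda_n\bigl(f(x_{n+1}) - f(p)\bigr) \le A_n(\alpha_{n+1}\beta_n - \alpha_n) \le A_n(\alpha_{n+1} - \alpha_n),\]
whose right-hand side is summable by telescoping. Testing \eqref{alg-ppa-pos} at $y = x_n$ (where $\Psi_{x_n}(x_n) = 0$ and $\Psi_{x_n}(x_{n+1}) \ge 0$) shows $(f(x_n))$ is nonincreasing, so combined with $\sum_n \lambda_n = \infty$ the above summability forces $f(x_n) \to f(p) = \inf_X f$.

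For $\Delta$-convergence, $(x_n)$ lies in the closed ball of radius $d(p, x_0) < \pi/(2\sqrt{\kappa})$ about $p$, so $r((x_n)) < \pi/(2\sqrt{\kappa})$ and every subsequence admits a further $\Delta$-convergent subsubsequence. Any $\Delta$-limit $x^*$ of such a subsubsequence $(x_{n_k})$ satisfies $f(x^*) \le \liminf_k f(x_{n_k}) = f(p)$ by the lower semi-continuity recalled in the Preliminaries, hence is itself a minimizer; applying the first step with $x^*$ in place of $p$ then shows $(d(x^*, x_n))$ converges. Uniqueness of asymptotic centers rules out two distinct $\Delta$-cluster points $x^* \ne y^*$: if $(x_{n_k})$ $\Delta$-converges to $x^*$, then $A((x_{n_k})) = \{x^*\}$ forces $\lim_n d(y^*, x_n) > \lim_n d(x^*, x_n)$, and swapping roles yields the reverse, a contradiction, so $(x_n)$ $\Delta$-converges to a minimum point of $f$. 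The main technical obstacle is that the prefactor $A_n$ is not a priori uniformly bounded---the standing hypothesis $d(v,w) < \pi/(2\sqrt{\kappa})$ need not supply a uniform gap from $\pi/(2\sqrt{\kappa})$---so the Fej\'{e}r-type inequality has to be used first to force $d(x_n, x_{n+1}) \to 0$ before it can yield the summability required for $f(x_n) \to f(p)$.
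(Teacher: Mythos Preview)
Your proof is correct and follows essentially the same route as the paper's: use Lemma~\ref{lemma-ineq-pos} to obtain Fej\'{e}r monotonicity and $d(x_n,x_{n+1})\to 0$, bound the prefactor, telescope to get $\sum_n \lambda_n\bigl(f(x_{n+1})-f(p)\bigr)<\infty$, combine with monotonicity of $\bigl(f(x_n)\bigr)$ and $\sum_n\lambda_n=\infty$ to force $f(x_n)\to\inf f$, and finish with the standard Fej\'{e}r/$\Delta$-cluster point argument. The only cosmetic differences are that you explicitly derive $\sum_n(1-\beta_n)<\infty$ (the paper just asserts $d(x_n,x_{n+1})\to 0$) and you phrase the passage from summability to $f(x_n)\to f(p)$ via a contradiction on the nonincreasing sequence rather than by pulling $f(x_{m+1})-f(p)$ out of the partial sum; both lead to the same conclusion.
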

\begin{proof}
Let $z$ be a minimum point of $f$. By Lemma \ref{lemma-ineq-pos}, we have that for every $n \in \mathbb{N}$,
\[\cos \left(\sqrt{\kappa}d(z,x_{n})\right) \le \cos \left(\sqrt{\kappa}d(z,x_{n+1})\right)\cos \left(\sqrt{\kappa}d(x_{n},x_{n+1})\right) \le \cos \left(\sqrt{\kappa}d(z,x_{n+1})\right).\]
This yields $d(z,x_{n+1}) \le d(z,x_{n})$, so $(x_n)$ is Fej\'{e}r monotone with respect to the set of minimum points of $f$. Moreover, $\ds \lim_{n \to \infty}d(z,x_n) \le d(z,x_0) < \pi/(2\sqrt{\kappa})$ and $\ds \lim_{n \to \infty}d(x_{n},x_{n+1}) = 0$, hence there exists $n_0 \in \mathbb{N}$ such that for each $n \ge n_0$, $1/\cos^2\left(\sqrt{\kappa}d(x_{n},x_{n+1})\right) < 2$. Note also that
\[f(x_{n+1}) + \frac{1}{\lambda_n}\Psi_{x_{n}}(x_{n+1}) \le f(x_{n}),\]
which shows that $\left(f(x_n)\right)$ is nonincreasing. In addition, again by Lemma \ref{lemma-ineq-pos}, we get that for all $n \ge n_0$,
\[\lambda_n\left(f(x_{n+1}) - f(z)\right) \le 6\left(\cos \left(\sqrt{\kappa}d(z,x_{n+1})\right) - \cos \left(\sqrt{\kappa}d(z,x_{n})\right)\right).\]
Thus, for $m \ge n_0$,
\begin{align*}
\left(f(x_{m+1}) - f(z)\right) \sum_{n=n_0}^m \lambda_n & \le \sum_{n=n_0}^m \lambda_n\left(f(x_{n+1}) - f(z)\right) \\
& \le 6\left(\cos \left(\sqrt{\kappa}d(z,x_{m+1})\right) - \cos \left(\sqrt{\kappa}d(z,x_{n_0})\right)\right) \le 6,
\end{align*}
from where 
\[f(x_{m+1}) \le f(z) + \frac{6}{ \sum_{n=n_0}^m \lambda_n}\]
and so $\ds \lim_{m \to \infty}f(x_m) = f(z)$.\\
Let $(x_{n_i})$ be a subsequence of $(x_n)$ which $\Delta$-converges to some $x \in X$. Then $f(x) \le \ds \liminf_{i \to \infty}f(x_{n_i}) = f(z)$, so $x$ is a minimum point of $f$. Since $(x_n)$ is Fej\'{e}r monotone with respect to the set of minimum points of $f$ and the $\Delta$-limit of every $\Delta$-convergent subsequence of $(x_n)$ is a minimum point of $f$, one can easily see that $(x_n)$ $\Delta$-converges to $x$ (see, for instance, \cite[Proposition 3.2.6]{Bac14}). 
\end{proof}

\begin{remark}
If we assume in the previous result that $\text{diam}(X) < \pi/(2\sqrt{\kappa})$, then $f$ always attains its minimum. Furthermore, if $X$ is compact, then $(x_n)$ converges to a minimum point of $f$.
\end{remark}
 
A related method for approximating a minimum point of a convex lower semi-continuous function $f$ was given in geodesic spaces by Jost \cite[Chapter 3]{Jos97} by considering a regularization of $f$ with a nonnegative, lower semi-continuous function satisfying a quantitative strict convexity condition, which is fulfilled by any uniformly convex function. We show next that the function $\Psi_x$ defined by \eqref{def-Psi} is indeed uniformly convex.

\begin{remark} \label{rmk-uc-fct}
The function $\Psi_x^1$ defined by \eqref{def-Psi-1} is uniformly convex.
\end{remark}
\begin{proof}
For $\eps > 0$, take $\delta = \eps^2/32$. Let $y,z \in X$ with $d(y,z) \ge \eps$, $t \in [0,1]$ and denote $a=\sqrt{\kappa} d(x,y)$, $b=\sqrt{\kappa} d(x,z) $ and $c = \sqrt{\kappa} d(y,z) \ge \sqrt{\kappa}\eps$. Then
\begin{align*}
\Psi^1_x\left(\frac{1}{2}y + \frac{1}{2}z\right) &\le \frac{\sin c}{\kappa\sin (c/2) (\cos a + \cos b)}= 2\cos (c/2)\frac{\cos a + \cos b}{\kappa(\cos a + \cos b)^2}\\
& \le 2\cos (c/2)\frac{\cos a + \cos b}{4\kappa\cos a \cos b} = \frac{1}{2}\cos (c/2) \left(\Psi^1_x(y) + \Psi^1_x(z)\right).
\end{align*}
Because $1 - \cos (c/2) = 2 \sin^2 (c/4) \ge 2(c/8)^2 = c^2/32$, we have that
\begin{align*}
\Psi^1_x\left(\frac{1}{2}y + \frac{1}{2}z\right) & \le \frac{1}{2}\left(1 - \frac{c^2}{32}\right) \left(\Psi^1_x(y) + \Psi^1_x(z)\right) \\
& = \frac{1}{2}\Psi^1_x(y) + \frac{1}{2}\Psi^1_x(z) - \frac{c^2}{64}\left(\Psi^1_x(y) + \Psi^1_x(z)\right).
\end{align*}
At the same time, $\Psi^1_x(y) + \Psi^1_x(z) \ge 2/\kappa$. Therefore, 
\[\Psi^1_x\left(\frac{1}{2}y + \frac{1}{2}z\right) \le \frac{1}{2}\Psi^1_x(y) + \frac{1}{2}\Psi^1_x(z) - \frac{c^2 }{32\kappa} \le \frac{1}{2}\Psi^1_x(y) + \frac{1}{2}\Psi^1_x(z) - \delta.\]
\end{proof}

Thus, $\Psi_x$ is uniformly convex as the sum of a convex and a uniformly convex function and we obtain the following immediate consequence of \cite[Theorem 3.1.1]{Jos97}.

\begin{theorem} \label{thm-conv-strong-pos}
Let $(X,d)$ be a complete CAT$(\kappa)$ space with $\kappa > 0$ such that for every $v,w \in X$, $d(v,w) < \pi/(2\sqrt{\kappa})$ and suppose $f : X \to (-\infty,\infty]$ is a proper, convex and lower semi-continuous function. If $x \in X$, $J_\lambda$ is defined by \eqref{def-res-pos} and 
\begin{equation} \label{thm-conv-strong-pos-cond1}
\limsup_{n \to \infty}d(x,J_{\lambda_n}x) < \pi/(2\sqrt{\kappa})
\end{equation}
for some sequence of positive real numbers $(\lambda_n)$ with $\ds \lim_{n \to \infty}\lambda_n = \infty$, then $\left(J_\lambda x\right)_{\lambda > 0}$ converges to a minimum point of $f$ as $\lambda \to \infty$.
\end{theorem}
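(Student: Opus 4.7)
The strategy is a Tikhonov-type selection argument. By Remark \ref{rmk-uc-fct}, $\Psi_x^1$ is uniformly convex, and since $\Psi_x^2$ is convex, $\Psi_x=\Psi_x^1+\Psi_x^2$ is uniformly convex and (being continuous) lower semi-continuous. Under this additional regularity, the resolvents should select, as $\lambda\to\infty$, the unique $\Psi_x$-minimizer on the minimum set $M$ of $f$. The plan is therefore to: (i) use hypothesis \eqref{thm-conv-strong-pos-cond1} to show $M\neq\emptyset$; (ii) define $z^*:=\argmin_{z\in M}\Psi_x(z)$, which is a single point by uniform convexity of $\Psi_x$ and closed convexity of $M$; (iii) uniformly bound $d(x,J_\lambda x)$ away from $\pi/(2\sqrt{\kappa})$ for \emph{all} $\lambda>0$; and (iv) show $J_\lambda x\to z^*$ in distance as $\lambda\to\infty$.

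For (i)--(iii), I would test the optimality of $J_{\lambda_n}x$ in \eqref{def-res-pos} against any $y\in\text{dom}\,f$: since $\Psi_x\ge 0$, this yields $f(J_{\lambda_n}x)\le f(y)+\Psi_x(y)/\lambda_n$, hence $\limsup_{n}f(J_{\lambda_n}x)\le \inf f$. Combined with $r((J_{\lambda_n}x))<\pi/(2\sqrt{\kappa})$, which follows from \eqref{thm-conv-strong-pos-cond1}, the argument recalled in the preliminaries produces a minimum point of $f$, so $M\ne\emptyset$; the existence and uniqueness of $z^*$ in (ii) is the analogous argument applied to the uniformly convex, continuous $\Psi_x$ restricted to $M$. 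For any $\lambda>0$, testing the optimality of $J_\lambda x$ against $z^*$ in \eqref{def-res-pos} and using $f(J_\lambda x)\ge f(z^*)=\inf f$ gives $\Psi_x(J_\lambda x)\le \Psi_x(z^*)$. Since $u\mapsto\kappa^{-1}(\sec u-\cos u)$ is strictly increasing on $[0,\pi/2)$, this forces $d(x,J_\lambda x)\le d(x,z^*)<\pi/(2\sqrt{\kappa})$ for every $\lambda>0$, giving (iii).

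The main obstacle is step (iv), which I would split into a $\Delta$-convergence step and a metric upgrade. Given any sequence $\mu_k\to\infty$, the uniform bound from (iii) ensures $(J_{\mu_k}x)$ admits a $\Delta$-convergent subsequence with some limit $y^*$. The estimate in (i), applied to the entire family, gives $f(J_\lambda x)\to\inf f$, so $\Delta$-lower semi-continuity of $f$ places $y^*\in M$, while the same property applied to $\Psi_x$ together with the uniform bound $\Psi_x(J_\lambda x)\le \Psi_x(z^*)$ yields $\Psi_x(y^*)\le \Psi_x(z^*)$, forcing $y^*=z^*$ by uniqueness. This gives $\Delta$-convergence of the whole family to $z^*$, and a two-sided squeeze then delivers $\Psi_x(J_\lambda x)\to \Psi_x(z^*)$. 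To upgrade to metric convergence, I would substitute the convex combination $(1-t)z^*+tJ_\lambda x$ into the optimality inequality for $J_\lambda x$ and invoke convexity of $f$ together with uniform convexity of $\Psi_x$; after cancelling the $tf(J_\lambda x)$ and $t\Psi_x(J_\lambda x)$ terms, dividing by $1-t$, and letting $t\nearrow 1$, one obtains
\[
\delta(d(J_\lambda x,z^*))\le \Psi_x(z^*)-\Psi_x(J_\lambda x),
\]
where $\delta$ is the modulus of uniform convexity of $\Psi_x$. Since the right-hand side tends to $0$ and $\delta$ is nondecreasing and vanishes only at $0$, we conclude $d(J_\lambda x,z^*)\to 0$, which is the desired strong convergence.
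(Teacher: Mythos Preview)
Your argument is correct and is, at the conceptual level, exactly the route the paper takes: the paper observes that $\Psi_x$ is uniformly convex (Remark~\ref{rmk-uc-fct} plus convexity of $\Psi_x^2$) and then simply invokes Jost's result \cite[Theorem~3.1.1]{Jos97} as a black box, so the theorem is stated as ``an immediate consequence'' with no further proof. What you have done is essentially to unfold Jost's Tikhonov--selection argument in this particular setting, supplying the details the paper outsources. Your steps (i)--(iii) are clean, and your key inequality $\delta(d(J_\lambda x,z^*))\le \Psi_x(z^*)-\Psi_x(J_\lambda x)$ is exactly the mechanism behind Jost's theorem. One small expository point: the passage ``this gives $\Delta$-convergence of the whole family'' is not quite immediate, since $\Delta$-convergence is not topological; but you do not actually need it, because once you have $\delta(d(J_\lambda x,z^*))\le \Psi_x(z^*)-\Psi_x(J_\lambda x)$ and $\Psi_x(J_\lambda x)\le \Psi_x(z^*)$, a direct contradiction argument (if $d(J_{\mu_k}x,z^*)\ge\eps$ along some $\mu_k\to\infty$, pass to a $\Delta$-convergent subsequence and obtain $y^*\in M$ with $\Psi_x(y^*)\le \Psi_x(z^*)-\delta(\eps)$) finishes the job without the intermediate $\Delta$-convergence claim. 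As a bonus, your proof identifies the limit as the point of $M$ closest to $x$, something the paper only records in the remark following the theorem.
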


\begin{remark}
If we assume above that $\text{diam}(X) < \pi/(2\sqrt{\kappa})$, then \eqref{thm-conv-strong-pos-cond1} is satisfied. Moreover, one can show that $\left(J_\lambda x\right)_{\lambda > 0}$ actually converges to the minimum point of $f$ which is closest to $x$.
\end{remark}

\cite[Chapter 4]{Jos97} studies energy functionals defined in an appropriate space of $L^2$-functions, which is a CAT$(0)$ space if the functions take values in a CAT$(0)$ space. Minimum points of such energy functionals are called generalized harmonic maps and their existence is proved via \cite[Theorem 3.1.1]{Jos97}. In a similar way, Theorem \ref{thm-conv-strong-pos} could prove to be useful for the study of energy functionals in an appropriate CAT$(\kappa)$ space of functions.

We focus next on the following splitting proximal point algorithm employed in the study of minimum points for a function $f : X \to (-\infty,\infty]$ which can be written as
\begin{equation} \label{def-fct-splitting}
f = \sum_{i=1}^N f_i, 
\end{equation}
where for each $i \in \{1, \ldots, N\}$, $f_i : X \to (-\infty,\infty]$ is proper, convex and lower semi-continuous. To this end we will use instead of the resolvent of $f$, the resolvents of the functions $f_i$,
\[J^i_\lambda (x) = \argmin_{y \in X}\left[f_i(y) + \frac{1}{\lambda}\Psi_{x}(y) \right]\]
and given $(\lambda_j)$ a sequence of positive real numbers and $x_0 \in X$, the sequence $(x_n)$ is defined by
\begin{equation} \label{alg-splitting}
x_{jN+1} = J^1_{\lambda_j}(x_{jN}), \quad x_{jN+2} = J^2_{\lambda_j}(x_{jN+1}), \quad \ldots, \quad x_{jN+N} = J^N_{\lambda_j}(x_{jN+N-1}).
\end{equation}
This method was recently studied in CAT$(0)$ spaces in \cite[Theorem 3.4]{Bac14a} and we adapt the proof strategy to our setting.

\begin{theorem}
Let $(X,d)$ be a compact CAT$(\kappa)$ space with $\kappa > 0$ such that for every $v,w \in X$, $d(v,w) < \pi/(2\sqrt{\kappa})$. Suppose $f : X \to (-\infty,\infty]$ is a function of the form \eqref{def-fct-splitting} which attains its minimum. For any $x_0 \in X$ and any sequence of positive real numbers $(\lambda_j)$ with $\ds \sum_{j \ge 0}\lambda_j = \infty$ and $\ds \sum_{j \ge 0}\lambda_j^2 < \infty$, let $(x_n)$ be defined by \eqref{alg-splitting}. If there exists $L > 0$ such that for every $j \in \mathbb{N}$ and $i \in \{1,\ldots, N\}$,
\begin{equation}\label{thm-splitting-cond1}
f_i(x_{jN}) - f_i(x_{jN+i}) \le L d(x_{jN}, x_{jN+i})
\end{equation}
and
\begin{equation}\label{thm-splitting-cond2}
f_i(x_{jN+i-1}) - f_i(x_{jN+i}) \le L d(x_{jN+i-1}, x_{jN+i}),
\end{equation}
then $(x_n)$ converges to a minimum point of $f$.
\end{theorem}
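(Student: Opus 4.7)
My plan is to adapt Ba\v c\'ak's argument from \cite{Bac14a} to the spherical setting, using the quantity $\cos(\sqrt\kappa\,d(z,\cdot))$ in place of $-d(z,\cdot)^2/2$. Fix a minimum point $z$ of $f$, and set $A_{i,j} := \cos(\sqrt\kappa\,d(z,x_{jN+i}))$ and $\theta_{i,j} := \sqrt\kappa\,d(x_{jN+i-1},x_{jN+i})$. Since $X$ is compact and every two points lie at distance strictly less than $\pi/(2\sqrt\kappa)$, the diameter $D$ of $X$ satisfies $D < \pi/(2\sqrt\kappa)$. Using $\sin\alpha \ge \alpha\,\sin(\sqrt\kappa D)/(\sqrt\kappa D)$ on $[0,\sqrt\kappa D]$ one obtains a uniform lower bound $\Psi_x(y) \ge c_1\,d(x,y)^2$ with $c_1 > 0$. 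The variational characterization $f_i(x_{jN+i}) + (1/\lambda_j)\Psi_{x_{jN+i-1}}(x_{jN+i}) \le f_i(x_{jN+i-1})$ combined with \eqref{thm-splitting-cond2} then yields $c_1\,d(x_{jN+i-1},x_{jN+i})^2 \le L\lambda_j\,d(x_{jN+i-1},x_{jN+i})$, so
\[
d(x_{jN+i-1},x_{jN+i}) \le L\lambda_j/c_1,\qquad d(x_{jN},x_{jN+i}) \le NL\lambda_j/c_1,
\]
and in particular $\theta_{i,j} = O(\lambda_j)$.

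Next I apply Lemma~\ref{lemma-ineq-pos} to $f_i$ with $x = x_{jN+i-1}$ and test point $z$; rearranging and using $A_{i-1,j} \ge 0$ together with $1/\cos\theta_{i,j} \ge 1$, this gives
\[
A_{i,j} \ge A_{i-1,j} + \nu_{i,j}\,\lambda_j\,\bigl(f_i(x_{jN+i}) - f_i(z)\bigr),
\]
where $\nu_{i,j} := 1/(\mu_{i,j}\cos\theta_{i,j})$ and $\mu_{i,j} := 2(1+1/\cos^2\theta_{i,j})$. Since $\theta_{i,j} = O(\lambda_j)$, one checks $\nu_{i,j} = 1/4 + O(\lambda_j^2)$ uniformly in $i$, so $\nu_j^+ := \max_i \nu_{i,j}$ and $\nu_j^- := \min_i \nu_{i,j}$ satisfy $\nu_j^+ - \nu_j^- = O(\lambda_j^2)$. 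Telescoping in $i$ from $1$ to $N$ with $B_j := A_{0,j} = \cos(\sqrt\kappa\,d(z,x_{jN}))$ yields
\[
B_{j+1} \ge B_j + \lambda_j \sum_{i=1}^N \nu_{i,j}\bigl(f_i(x_{jN+i}) - f_i(z)\bigr).
\]

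The third step is to convert this inner sum into $f(x_{jN}) - f(z)$ plus a summable error. Splitting each summand as $\bigl(f_i(x_{jN}) - f_i(z)\bigr) + \bigl(f_i(x_{jN+i}) - f_i(x_{jN})\bigr)$ and using \eqref{thm-splitting-cond1} to bound the second part below by $-Ld(x_{jN},x_{jN+i}) \ge -NL^2\lambda_j/c_1$, while writing $\sum_i \nu_{i,j}\alpha_i \ge \nu_j^-\sum_i \alpha_i - (\nu_j^+-\nu_j^-)\sum_i \alpha_i^-$ for the first part (with $\alpha_i := f_i(x_{jN})-f_i(z)$; the quantity $\alpha_i^- \le f_i(z) - \inf_X f_i$ is uniformly bounded since each $f_i$ is bounded below by \cite[Theorem 3.6]{KimKoh16}), I obtain
\[
B_{j+1} \ge B_j + \nu_j^-\,\lambda_j\bigl(f(x_{jN}) - f(z)\bigr) - b_j,\qquad \sum_{j \ge 0} b_j < \infty,
\]
the summability following from $\sum \lambda_j^2 < \infty$. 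Since $f(x_{jN}) \ge f(z)$, Lemma~\ref{lemma-splitting} applied to $a_j = B_j \in [\cos(\sqrt\kappa D),1]$ gives convergence of $(B_j)$, hence of $d(z,x_{jN})$. Summing the inequality also produces $\sum_j \lambda_j\,(f(x_{jN}) - f(z)) < \infty$, and from $\sum\lambda_j = \infty$ together with $\nu_j^- \to 1/4$ one concludes $\liminf_j f(x_{jN}) = f(z)$. Compactness extracts a subsequence $x_{j_kN} \to x^*$, and lower semi-continuity of $f$ forces $f(x^*) \le f(z)$, i.e., $x^*$ is a minimizer. Re-running the above with $z := x^*$ shows $d(x^*,x_{jN})$ converges, so every convergent subsequence of $(x_{jN})$ has the same limit $x^*$; by compactness, $x_{jN} \to x^*$, and the step-size estimate $d(x_{jN+i},x_{jN}) \le NL\lambda_j/c_1 \to 0$ forces $x_n \to x^*$.

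The main obstacle is the bookkeeping in the third paragraph: the Jensen-type coefficients $\nu_{i,j}$ depend on the inner index $i$, and the argument closes only because they differ from a common value by $O(\lambda_j^2)$, precisely the order made summable by the hypothesis $\sum_j \lambda_j^2 < \infty$. Everything else is a faithful spherical translation of Ba\v c\'ak's CAT$(0)$ proof.
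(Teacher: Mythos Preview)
Your proof is correct and follows the same strategy as the paper's: apply Lemma~\ref{lemma-ineq-pos} to each $f_i$, derive the step-size bound $d(x_{jN+i-1},x_{jN+i})=O(\lambda_j)$ from the variational characterization together with \eqref{thm-splitting-cond2}, telescope, use \eqref{thm-splitting-cond1} to control the cross terms, invoke Lemma~\ref{lemma-splitting}, and finish by compactness and lower semi-continuity. The only difference is that the paper avoids your $\nu_j^{\pm}$ bookkeeping entirely by bounding the factor $2(1+1/\cos^2\theta_{i,j})$ by a single constant $2\alpha$ for all $j\ge j_0$ (and dropping the $\cos\theta_{i,j}$ factor since $A_{i,j}\ge 0$), which directly gives $\lambda_j\bigl(f_i(x_{jN+i})-f_i(z)\bigr)\le 2\alpha\,(A_{i,j}-A_{i-1,j})$ and makes the telescoping and error estimate considerably cleaner.
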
 
\begin{proof}
Let $z$ be a minimum point of $f$. For $j \in \mathbb{N}$ and $i \in \{1, \ldots, N\}$, apply Lemma \ref{lemma-ineq-pos} to the function $f_i$, the resolvent $J_{\lambda_j}^i$ and the points $x_{jN+i-1}$ and $z$ to get that
\begin{align*}
\lambda_j\left(f_i(x_{jN+i}) - f_i(z)\right) & \le 2\left(1 + \frac{1}{\cos^2\left(\sqrt{\kappa}d(x_{jN+i-1},x_{jN+i})\right)}\right)\\
& \quad \times \left(\cos \left(\sqrt{\kappa}d(z,x_{jN+i})\right) - \cos \left(\sqrt{\kappa}d(z,x_{jN+i-1})\right) \right).
\end{align*}
Let $m \in  \{1, \ldots, N\}$. Because
\[f_m(x_{jN+m}) + \frac{1}{\lambda_j} \Psi_{x_{jN+m-1}}(x_{jN+m}) \le f_m(x_{jN+m-1})\]
and
\begin{align*}
\Psi_{x_{jN+m-1}}(x_{jN+m}) & = \frac{\sin^2\left(\sqrt{\kappa}d(x_{jN+m-1},x_{jN+m})\right)}{\cos \left(\sqrt{\kappa}d(x_{jN+m-1},x_{jN+m})\right)} \\
& \ge \sin^2\left(\sqrt{\kappa}d(x_{jN+m-1},x_{jN+m})\right) \ge \frac{\kappa d(x_{jN+m-1},x_{jN+m})^2}{4},
\end{align*}
by \eqref{thm-splitting-cond2}, we have that
\begin{align*}
\frac{\kappa d(x_{jN+m-1},x_{jN+m})^2}{4} &\le \lambda_j\left(f_m(x_{jN+m-1}) - f_m(x_{jN+m})\right)\\
& \le \lambda_j L d(x_{jN+m-1},x_{jN+m}),
\end{align*}
from where
\begin{equation}\label{thm-splitting-eq1}
d(x_{jN+m-1},x_{jN+m}) \le 4\lambda_jL/\kappa.
\end{equation}
Take $j_0 \in \mathbb{N}$ such that $\lambda_j \le 1$ for $j \ge j_0$ and denote $\alpha = 1 + 1/\cos^2(4L/\sqrt{\kappa})$. Then if $j \ge j_0$, $1 + 1/\cos^2\left(\sqrt{\kappa}d(x_{jN+i-1},x_{jN+i})\right) \le \alpha$ and so
\[\lambda_j\left(f_i(x_{jN+i}) - f_i(z)\right) \le 2\alpha\left(\cos \left(\sqrt{\kappa}d(z,x_{jN+i})\right) - \cos \left(\sqrt{\kappa}d(z,x_{jN+i-1})\right) \right).\]
Note that 
\[\sum_{i=1}^N\left(f_i(x_{jN+i}) - f_i(z)\right) = f(x_{jN}) - f(z) + \sum_{i=1}^N\left(f_i(x_{jN+i}) - f_i(x_{jN})\right).\]
Hence, for all $j \ge j_0$,
\begin{align*}
\lambda_j\left(f(x_{jN}) - f(z)\right) &\le 2\alpha\left(\cos \left(\sqrt{\kappa}d(z,x_{jN+N})\right) - \cos \left(\sqrt{\kappa}d(z,x_{jN})\right) \right) \\
& \quad + \lambda_j\sum_{i=1}^N\left(f_i(x_{jN}) - f_i(x_{jN+i})\right).
\end{align*}
Using \eqref{thm-splitting-eq1} we obtain that
\[d(x_{jN},x_{jN+i}) \le d(x_{jN}, x_{jN+1}) + \ldots + d(x_{jN+i-1}, x_{jN+i}) \le 4i\lambda_j L/\kappa,\]
which, by \eqref{thm-splitting-cond1}, yields that for $j \ge j_0$,
\begin{align}\label{thm-splitting-eq2}
\begin{split}
\lambda_j\left(f(x_{jN}) - f(z)\right) & \le 2\alpha\left(\cos \left(\sqrt{\kappa}d(z,x_{(j+1)N})\right) - \cos \left(\sqrt{\kappa}d(z,x_{jN})\right) \right) \\
& \quad + 2N(N+1)\lambda_j^2L^2/\kappa.
\end{split}
\end{align}
Since $z$ is a minimum point of $f$, we have that for $j \ge j_0$,
\[\cos \left(\sqrt{\kappa}d(z,x_{(j+1)N})\right) \ge  \cos \left(\sqrt{\kappa}d(z,x_{jN})\right) - N(N+1)\lambda_j^2L^2/(\kappa\alpha),\]
which, by Lemma \ref{lemma-splitting}, shows that the sequence $\left(\cos \left(\sqrt{\kappa}d(z,x_{jN})\right)\right)_j$ is convergent and so the sequence $\left(d(z,x_{jN})\right)_j$ converges too. Moreover, using \eqref{thm-splitting-eq2}, we get that $\ds \sum_{j=0}^\infty \lambda_j\left(f(x_{jN}) - f(z)\right) < \infty$. This implies that there exists a subsequence $(x_{j_lN})_l$ of $(x_{jN})$ such that $\ds \lim_{l \to \infty}f(x_{j_lN}) = f(z)$. We may assume that $(x_{j_lN})_l$ converges to some $p \in X$ (otherwise take a convergent subsequence of it). Since $f$ is lower semi-continuous, $\ds f(p) \le \lim_{l \to \infty}f(x_{j_lN}) = f(z)$, so $p$ is a minimum point of $f$, which means that $\left(d(p,x_{jN})\right)_j$ is convergent and must converge to $0$ since $(x_{j_lN})_l$ converges to $p$. Now one only needs to use \eqref{thm-splitting-eq1} to obtain that $(x_{jN+m})_j$ converges to $p$ for all $m \in \{1, \ldots, N\}$ which finally yields that $(x_n)$ converges to $p$.
\end{proof}

\begin{remark}
Other choices for the function $\Psi_x$ in the definition of the resolvent \eqref{def-res-pos} are also possible, even if the image of $\Psi_x$ is not $[0,\infty)$. For instance, it is easy to see that the resolvent is well-defined and that similar convergence results hold for the sequence generated by the proximal point algorithm when considering $\Psi_x = \Psi^1_x$ or $\Psi_x = \Psi^2_x$.  
\end{remark}

\begin{remark}
Although the proximal point algorithm as given in \cite{Bac13} can be applied in any CAT$(\kappa)$ space with $\kappa \le 0$, as before one could also consider for $\kappa < 0$ another algorithm of this type by taking, for example, in \eqref{def-res-pos} $\Psi_x: X \to [0, \infty)$, $\ds\Psi_x(y) = -\frac{1}{\kappa}\left(\cosh \left(\sqrt{-\kappa}d(y,x)\right) -\frac{1}{\cosh \left(\sqrt{-\kappa}d(y,x)\right)}\right)$. Note that when $\kappa \nearrow 0$, we obtain the squared distance function as for CAT$(0)$ spaces. It turns out that the resolvent is indeed well-defined and that analogous convergence results can be proved in this case too.
 \end{remark}
 
{\bf Acknowledgements:} 
The authors have been partially supported by DGES (MTM2015-65242-C2-1-P). A. Nicolae would also like to
acknowledge the Juan de la Cierva-incorporaci\'{o}n Fellowship Program of the Spanish Ministry of Economy and Competitiveness.

\end{document}